\font\bigmath=cmsy10 scaled \magstep 3
\newcommand{\bigtimes}{\hbox{\bigmath \char'2}}
\newcommand{\emp}{\emptyset}
\newcommand{\ben}{\mathbb N}
\newcommand{\ber}{\mathbb R}
\newcommand{\beq}{\mathbb Q}
\newcommand{\bez}{\mathbb Z}
\newcommand{\nhat}[1]{\{1,2,\ldots,#1\}}
\newcommand{\ohat}[1]{\{0,1,\ldots,#1\}}
\newcommand{\mod}{\hbox{\rm mod }}
\newcommand{\ggamma}{\raise 2 pt \hbox{$\gamma$}}
\newtheorem{theorem}{Theorem}[section]
\newtheorem{corollary}[theorem]{Corollary}
\newtheorem{lemma}[theorem]{Lemma}
\theoremstyle{definition}
\newtheorem{definition}[theorem]{Definition}
\begin{document}

\title{Multiply partition regular matrices}

\author{Dennis Davenport \and Neil Hindman${}^1$ \and Imre Leader \and Dona Strauss}
\date{}

\maketitle

\addtocounter{footnote}{1}
\footnotetext{This author acknowledges support received from the National Science
Foundation (USA) under grant DMS-1160566.\hfill\break
{\it Keywords:} matrix, image partition regular, kernel partition regular, 
columns condition, Rado's Theorem\hfill\break
2010 MSC: 05D10}

\begin{abstract}

Let $A$ be a finite matrix with rational entries.  We say that $A$ is {\it doubly image partition
regular\/} if whenever the set $\ben$ of positive integers is finitely coloured,
there exists $\vec x$ such that the entries of $A\vec x$ are all the same colour (or {\it monochromatic\/})
and also, the entries of $\vec x$ are monochromatic. Which matrices are
doubly image partition regular? 

More generally, we say that a pair of matrices
$(A,B)$, where $A$ and $B$ have the same number of rows, is  
{\it doubly 
kernel partition regular\/}
if whenever $\ben$ is finitely coloured, there exist vectors $\vec x$ and
$\vec y$, each monochromatic, such that $A \vec x + B \vec y = 0$. There is an
obvious sufficient condition for the pair $(A,B)$ to be doubly kernel partition
regular, namely that 
there exists a positive rational $c$ such that the matrix
$M=(\begin{array}{ccccc}A&cB\end{array})$ is kernel partition
regular.  (That is, whenever $\ben$ is finitely coloured, there exists monochromatic $\vec x$
such that $M \vec x=\vec 0$.) Our aim in this paper is to show that this
sufficient condition is also necessary. 
As a consequence we have that a matrix $A$ is doubly image partition regular 
if and only if there is a positive rational $c$
such that the matrix $(\begin{array}{lr}A&cI\end{array})$ is kernel partition regular, where 
$I$ is the identity matrix of the appropriate size. 

We also prove extensions
to the case of several matrices.
\end{abstract}

\section{Introduction}

Let $u,v\in\ben$ and let $A$ be a $u\times v$ matrix with entries from $\beq$.  Then
$A$ is said to be {\it kernel partition regular\/} (abbreviated KPR) if and only if whenever 
$r\in\ben$ and $\varphi:\ben\to\nhat{r}$, there exists some $\vec x\in\ben^v$
such that $\varphi$ is constant on the entries of $\vec x$ and $A\vec x=\vec 0$.  In
the standard ``chromatic'' terminology, $\varphi$ is said to be an {\it $r$-colouring\/} of
$\ben$ and $\vec x$ is said to be {\it monochromatic\/}.  If $r$ is not specified, one
may say simply that $\ben$ is finitely coloured by $\varphi$.  The 
question of which matrices are KPR was solved in 1933 by Richard Rado \cite{R}.

\begin{definition}\label{defcol} Let $u,v\in\ben$ and let $A$ be a $u\times v$ matrix with entries from $\beq$
and let $\vec c_1,\vec c_2,\ldots,\vec c_v$ be the columns of $\vec A$.  
Then $A$ satisfies the {\it columns condition\/} if and only if there exists $m\in\nhat{v}$
and a partition $\{I_1,I_2,\ldots,I_m\}$ of $\nhat{v}$ such that
\begin{itemize}
\item[(1)] $\sum_{i\in I_1}\vec c_i=\vec 0$ and
\item[(2)] for each $t\in\{2,3,\ldots,m\}$, if any, 
$\sum_{i\in I_t}\vec c_i$ is a linear combination of $\{\vec c_i:i\in\bigcup_{j=1}^{t-1}I_j\}$.
\end{itemize}
\end{definition}

\begin{theorem}[Rado's Theorem]\label{Rado}
 Let $u,v\in\ben$ and let $A$ be a $u\times v$ matrix with entries from $\beq$.  Then
$A$ is kernel partition regular if and only if $A$ satisfies the columns condition. \end{theorem}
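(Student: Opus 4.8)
The plan is to prove the two implications separately, and I expect the reverse direction (columns condition $\Rightarrow$ KPR) to carry essentially all of the difficulty.

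For the forward direction (KPR $\Rightarrow$ columns condition) I would argue by contraposition using an explicit finite colouring built from a single large prime $p$. After clearing denominators so that $A$ has integer entries, choose $p$ larger than the sum of the absolute values of all entries, and colour each $n\in\ben$ by a finite amount of data read off from its base-$p$ expansion, recording (reading upward from the least significant nonzero digit) the first few nonzero digits together with coarse information about the gaps between the $p$-adic levels on which they sit. Given a monochromatic $\vec x$ with $A\vec x=\vec 0$, group the coordinates by their $p$-adic valuation $v_p(x_i)$ and read the equations modulo successive powers of $p$. At the lowest level the relevant coordinates share a common nonzero digit $d\pmod p$, so reducing there forces $d\sum_{i\in I_1}\vec c_i\equiv\vec 0\pmod p$ for the index set $I_1$ of lowest-valuation coordinates; since $p$ exceeds all the entries and $d\not\equiv0$, this congruence is an \emph{exact} equality, giving condition~(1). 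Passing to higher levels and subtracting off the contribution of the earlier ones yields, inductively, that each later block sum $\sum_{i\in I_t}\vec c_i$ lies in the span of the already-seen columns, which is exactly condition~(2).

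For the reverse direction (columns condition $\Rightarrow$ KPR), the strategy I would use is to reduce the entire problem to the partition regularity of a single universal family of configurations, the Deuber $(m,p,c)$-sets (the set generated by parameters $x_1,\dots,x_m$ consisting of all $cx_i+\sum_{j>i}\lambda_j x_j$ with $\lambda_j\in\{-p,\dots,p\}$). First I would show, by a linear-algebra argument driven by the partition $\{I_1,\dots,I_m\}$ supplied by the columns condition, that one can choose parameters so that every $(m,p,c)$-set contains a solution $\vec x$ of $A\vec x=\vec 0$ all of whose entries lie inside that set: the free parameters of the $(m,p,c)$-set are used to satisfy first equation~(1) and then the dependencies of condition~(2), with denominators cleared to keep everything integral. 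It then remains only to prove that $(m,p,c)$-sets are themselves partition regular, i.e.\ that whenever $\ben$ is finitely coloured some colour class contains an entire $(m,p,c)$-set.

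This last fact is the \emph{main obstacle}, and it is where the genuine Ramsey theory enters. I would establish it by induction on $m$, peeling off one generator of the $(m,p,c)$-set at a time; the induction step requires a single monochromatic structure that simultaneously supplies an arithmetic progression (to absorb the coefficients in $\{-p,\dots,p\}$) and a common base point, which is delivered by van der Waerden's theorem together with a compactness/product-colouring argument (equivalently, one application of the Hales--Jewett theorem). Chaining the two reductions then produces a monochromatic $\vec x$ with $A\vec x=\vec 0$. As an alternative route for sufficiency I would be prepared to work inside the Stone--\v{C}ech compactification $\beta\ben$, showing that every central set (a member of a minimal idempotent) contains solutions to every columns-condition system; this repackages the iterated Ramsey induction into the algebraic fact that minimal idempotents exist.
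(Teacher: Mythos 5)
The paper does not prove this theorem at all: it is stated as Rado's 1933 result and attributed to \cite{R}, so there is no internal proof to compare yours against. Judged on its own, your outline is the standard modern treatment: necessity via a base-$p$ digit colouring (essentially Rado's own argument, and also the germ of the ``start base $p$'' colouring the paper deploys in the proof of Theorem \ref{thmdkpr}), and sufficiency via Deuber's $(m,p,c)$-sets, which is Deuber's 1973 route rather than Rado's original induction. Both halves are sound in outline, and the alternative sufficiency route through central sets that you mention is the one taken in \cite{HS}, which the paper relies on elsewhere.

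Two points in the necessity half need tightening. First, taking $p$ larger than the sum of the absolute values of the entries does upgrade the congruence $d\sum_{i\in I_1}\vec c_i\equiv\vec 0\pmod p$ to the exact identity of condition (1), but for condition (2) the conclusion you extract is only that $\sum_{i\in I_t}\vec c_i$ lies in the span of the earlier columns \emph{modulo} $p$; to pass to the rational span you must either run the colouring for infinitely many primes, or choose $p$ to avoid the finitely many primes at which the ranks of the relevant augmented submatrices drop --- a size bound on $p$ alone does not visibly do this. Second, the ordered partition of the index set by $p$-adic valuation depends on $p$, so if you do use infinitely many primes you need a pigeonhole step to fix one partition $\{I_1,\ldots,I_m\}$ that works for infinitely many of them (this is exactly the role of the infinite set $P$ in the paper's proof of Theorem \ref{thmdkpr}). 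In the sufficiency half the outline is correct but all of the genuine Ramsey-theoretic content --- the partition regularity of $(m,p,c)$-sets --- is deferred to an induction you only gesture at; as written it is a correct plan rather than a proof.
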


For example the fact that the matrix $(\begin{array}{ccc}1&1&-1\end{array})$ satisfies the 
columns condition (with $I_1=\{1,3\}$ and $I_2=\{2\}$) shows that whenever $\ben$ is finitely
coloured, there exist some monochromatic $x_1$, $x_2$, and $x_3$ with $x_1+x_2=x_3$, which
is Schur's Theorem \cite{S}.  

As another example, the length $4$ version of van der Waerden's Theorem \cite{W} says
that whenever $\ben$ is finitely coloured, there exist $a,d\in\ben$ such that
$\{a,a+d,a+2d,a+3d\}$ is monochromatic.  Letting $x_1=a$, $x_2=a+d$, $x_3=a+2d$,
$x_4=a+3d$, and $x_5=d$, the fact that the matrix
$$\left(\begin{array}{ccccc}
-1&1&0&0&-1\\
0&-1&1&0&-1\\
0&0&-1&1&-1\end{array}\right)$$
satisfies the columns condition (with $I_1=\{1,2,3,4\}$ and $I_2=\{5\}$)
shows that one can get monochromatic $\vec x$ with
$x_2-x_1=x_3-x_2=x_4-x_3=x_5$.

We remark that the above two examples were already known when Rado's 
Theorem was
proved.  The importance of Rado's Theorem is that it reduces the question of
whether or not a given matrix is kernel partition regular to a finite
computation. For example, the fact that the matrix
$$\left(\begin{array}{ccccccc}
1&1&0&-1&0&0&0\\
1&0&1&0&-1&0&0\\
0&1&1&0&0&-1&0\\
1&1&1&0&0&0&-1\end{array}\right)$$
satisfies the columns condition (with $I_1=\{1,4,5,7\}$, $I_2=\{2,6\}$, and
$I_3=\{3\}$) established the previously unknown extension of Schur's Theorem
that whenever $\ben$ is finitely coloured, there must exist
$x_1$, $x_2$, and $x_3$ with $\{x_1,x_2,x_3,x_1+x_2,x_1+x_3,x_2+x_3,x_1+x_2+x_3\}$
monochromatic.  (And it is easy to similarly establish extensions
for any finite number of terms.)

We now turn to the other key notion of partition regularity.
 
\begin{definition}\label{defipr} Let $u,v\in\ben$ and let
$A$ be a $u\times v$ matrix with rational entries.  Then $A$ is
{\it image partition regular\/} (abbreviated IPR) if and only if
whenever $\ben$ is finitely coloured, there exists $\vec x\in\ben^v$
such that the entries of $A\vec x$ are monochromatic.\end{definition}

Notice that the applications of Rado's Theorem cited above are very naturally
stated in terms of image partition regular matrices.  Specifically, Schur's 
Theorem, the length 4 version of van der Waerden's Theorem, and
the three term extension of Schur's Theorem are the assertions that the
following three matrices are image partition regular.
$$\begin{array}{ccc}
\left(\begin{array}{cc}
1&0\\
0&1\\
1&1\end{array}\right)
&\left(\begin{array}{cc}
1&0\\
1&1\\
1&2\\
1&3\end{array}\right)
&\left(\begin{array}{ccc}
1&0&0\\
0&1&0\\
1&1&0\\
0&0&1\\
1&0&1\\
0&1&1\\
1&1&1\end{array}\right)\end{array}$$

In view of the fact that many problems are very naturally stated as
questions about image partition regularity, it is surprising that IPR matrices
were not characterized until 1993 \cite{HL}.  Among the characterizations
obtained then were two that used the columns condition, and were therefore
computable.  Several other characterizations have been obtained since 
then. (See \cite[Theorem 2.10]{HLS} and \cite[Theorem 15.24]{HS}.)

It is very natural to ask the following question about image partition regular matrices. When can one insist that not only are the entries of $A\vec x$ 
monochromatic, but also that the entries of $\vec x$ are monochromatic, though
not necessarily of the same colour as the entries of $A\vec x$? It is
this question which motivates the current paper.

There are some finite matrices over $\beq$ which can be seen at a glance to have this property.
These are the matrices which have no zero rows and have the property that, for some positive
natural number $c$, the first non-zero entry in each row is equal to $c$. There are also very simple IPR matrices which do not have this property. The diagonal matrix $\left(\matrix{1&0\cr 0&2}\right)$ provides an example, as can be seen by  mapping
each positive integer to the starting position (mod 2) of its base 2 expansion.

\begin{definition}\label{defdipr} Let $u,v\in\ben$ and let $A$ be a 
$u\times v$ matrix with entries from $\beq$.  Then $A$ is {\it doubly
image partition regular\/} (abbreviated doubly IPR) if and only if
whenever $\ben$ is finitely coloured, there exists monochromatic
$\vec x\in\ben^v$ such that $A\vec x$ is monochromatic.\end{definition}

It is easy to see (or see below) a sufficient condition. Suppose that we can 
insist that, for some positive rational $c$, we actually have that all the
entries of $c \vec x$ are the same colour as the entries of $Ax$; then it
follows that $A$ is doubly IPR. One of our main aims in this paper is to
show that this sufficient condition is also necessary.

The following very simple fact relates the notion of doubly IPR to kernel
partition regularity. Given $n\in\ben$ we denote the $n\times n$
identity matrix by $I_n$. We have that if $A$ is a 
$u\times v$ matrix with entries from $\beq$ then $A$ is doubly
IPR if and only if whenever $\ben$ is finitely coloured, there
exist monochromatic $\vec x\in\ben^v$ and monochromatic $\vec y\in
\ben^u$ such that $A\vec x-I_u\vec y=\vec 0$.

This fact in turn motivates the following definition.  

\begin{definition}\label{defdkpr} 
Let $u,v,w \in\ben$.  
Let $A$ be a $u\times v$ matrix with entries from $\beq$ and let
$B$ be a $u\times w$ matrix with entries from $\beq$
Then $(A,B)$ is {\it doubly kernel partition regular\/} (abbreviated
doubly KPR) if and only if
whenever $\ben$ is finitely coloured, there exist 
monochromatic $\vec x$ and $\vec y$ such that $A \vec x + B \vec y=\vec 0$.
\end{definition}

So a matrix $A$ is doubly IPR if and only if the pair $(A,-I)$ is doubly KPR.
A key idea in our proof of the characterisation of doubly IPR is to shift our
attention from this `asymmetrical' case of $(A,-I)$ and to consider instead the
more general question of when $(A,B)$ is doubly KPR. Again, it turns out
(see below) that if there is a positive rational $c$ such that the matrix
$(\begin{array}{ccccc}A&cB\end{array})$ is KPR then $(A,B)$ is doubly KPR. We 
will show that this sufficient condition is in fact necessary.

More generally, we make the following definition.

\begin{definition}\label{defmkpr} 
Let $u,k,v_1,v_2,\ldots,v_k\in\ben$ with $k\geq 2$.  
For $t\in\nhat{k}$, let $A_t$ be a $u\times v_t$ matrix with entries from $\beq$.
Then $(A_1,A_2,\ldots,A_k)$ is {\it multiply kernel partition regular\/} (abbreviated
multiply KPR) if and only if
whenever $\ben$ is finitely coloured, there exist for each $t\in\nhat{k}$, 
monochromatic $\vec x_t$ such that $A_1\vec x_1+A_2\vec x_2+\ldots+A_k\vec x_k=\vec 0$.
\end{definition}

Section 2 of this paper consists of a proof of the fact that $(A_1,A_2,\ldots,A_k)$
is multiply KPR if and only if there exist positive rationals
$c_2,c_3,\ldots,c_k$ such that the 
matrix $(\begin{array}{ccccc}A_1&c_2A_2&c_3A_3&\ldots&c_kA_k\end{array})$ is KPR.
Section 3 consists of derivation of some consequences of this fact,
including the fact that the $u\times v$ matrix $A$ is doubly IPR
if and only if there is some positive rational $c$ such that
$\left(\begin{array}{c}cI_v\\A\end{array}\right)$ is IPR.  

We conclude this introduction with the following simple and well
known fact which we will use a couple of times.

\begin{lemma}\label{lemnN} Let $u,v,n\in\ben$ and let 
$A$ be a KPR $u\times v$ matrix with rational entries.
Then whenever $\ben$ is finitely coloured, there 
exists monochromatic $\vec x\in(n\ben)^v$ such that
$A\vec x=\vec 0$.\end{lemma}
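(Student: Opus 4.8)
The plan is to reduce this statement to the plain KPR property of $A$ by pulling back the given colouring along multiplication by $n$. First I would fix an arbitrary finite colouring $\varphi:\ben\to\nhat{r}$ and define an auxiliary colouring $\psi:\ben\to\nhat{r}$ by setting $\psi(m)=\varphi(nm)$ for each $m\in\ben$. This is again an $r$-colouring of $\ben$, so it is a legitimate input to the hypothesis that $A$ is KPR.

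Next I would apply the definition of kernel partition regularity to the colouring $\psi$. Since $A$ is KPR, this produces a vector $\vec y\in\ben^v$ that is $\psi$-monochromatic and satisfies $A\vec y=\vec 0$. I would then set $\vec x=n\vec y$ and verify the three required properties. The entries of $\vec x$ lie in $n\ben$ because the entries of $\vec y$ lie in $\ben$; the kernel equation is preserved because $A\vec x=A(n\vec y)=n\,A\vec y=\vec 0$ by linearity; and $\vec x$ is $\varphi$-monochromatic because for each entry $y_i$ of $\vec y$ we have $\varphi(ny_i)=\psi(y_i)$, and the values $\psi(y_i)$ are all equal by the choice of $\vec y$.

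There is essentially no obstacle in this argument, which is why the lemma is flagged as simple and well known. The only point deserving a moment's care is the definition of the pullback colouring $\psi$: it is exactly this device that converts $\psi$-monochromaticity of $\vec y$ into $\varphi$-monochromaticity of $n\vec y$, and everything else follows from the linearity of $A$. I would therefore present the proof in the three short moves above, without any need for Rado's Theorem beyond the bare definition of KPR.
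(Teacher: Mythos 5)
Your proposal is correct and is essentially identical to the paper's own proof: both pull back the colouring via $\psi(x)=\varphi(nx)$, apply KPR to $\psi$ to obtain $\vec y$, and set $\vec x=n\vec y$. Nothing further is needed.
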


\begin{proof} Let $\varphi$ be a finite colouring of $\ben$
and define a colouring $\psi$ of $\ben$ by $\psi(x)=\varphi(nx)$.
Pick $\vec y\in\ben^v$ which is monochromatic with respect to 
$\psi$ such that $A\vec y=\vec 0$.  Let $\vec x=n\vec y$.
Then $\vec x$ is monochromatic with respect to $\varphi$ and
$A\vec x=\vec 0$.\end{proof}

\section{Characterising multiply kernel partition regular matrices}

We begin with the easy half of the main theorem.
We write $\beq^{+}$ for the set of positive rationals.

\begin{lemma}\label{lemdkpr} Let $u,k,v_1,v_2,\ldots,v_k\in\ben$ with $k\geq 2$.  
For $t\in\nhat{k}$, let $A_t$ be a $u\times v_t$ matrix with entries from $\beq$.
If there exist $c_2,c_3,\ldots,c_k\in\beq^+$ such that
$(\begin{array}{ccccc}A_1&c_2A_2&c_3A_3&\ldots&c_kA_k\end{array})$ is KPR,
then $(A_1,A_2,\ldots,A_k)$ is multiply KPR.
\end{lemma}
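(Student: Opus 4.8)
The plan is to exploit the given KPR matrix $M=(\begin{array}{ccccc}A_1&c_2A_2&\ldots&c_kA_k\end{array})$ directly. Given a finite colouring $\varphi$ of $\ben$, kernel partition regularity of $M$ produces a monochromatic $\vec z\in\ben^v$ (where $v=v_1+\cdots+v_k$) with $M\vec z=\vec 0$. Splitting $\vec z$ into consecutive blocks $\vec z=(\vec z_1,\ldots,\vec z_k)$ of lengths $v_1,\ldots,v_k$, the relation $M\vec z=\vec 0$ reads $A_1\vec z_1+c_2A_2\vec z_2+\cdots+c_kA_k\vec z_k=\vec 0$. The natural candidate is then to set $\vec x_1=\vec z_1$ and $\vec x_t=c_t\vec z_t$ for $t\geq 2$, since this immediately gives $A_1\vec x_1+\cdots+A_k\vec x_k=M\vec z=\vec 0$, the required kernel relation.

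The hard part is that this candidate need not meet the two constraints in the definition of multiply KPR: the vectors $\vec x_t=c_t\vec z_t$ need not have entries in $\ben$, and even when they do, multiplying the entries of the monochromatic block $\vec z_t$ by the rational $c_t$ may well scatter them across several colours. So the single monochromaticity of $\vec z$ is not enough; I need to arrange in advance that each scaled block $c_t\vec z_t$ is itself monochromatic for $\varphi$.

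To fix both issues at once I would not colour with $\varphi$ but with a composite colouring, clearing denominators in the spirit of Lemma \ref{lemnN}. Concretely, choose $n\in\ben$ with $nc_t\in\ben$ for every $t\in\{2,\ldots,k\}$ (a common multiple of the denominators), and define $\psi:\ben\to\nhat{r}^{\,k}$ by recording the $\varphi$-colour of $nm$ in its first coordinate and the $\varphi$-colour of $nc_tm$ in its $t$-th coordinate for $t\geq2$; since $nc_t\in\ben$, every argument of $\varphi$ here is a positive integer, so $\psi$ is a genuine finite colouring. Applying the hypothesis that $M$ is KPR to $\psi$ yields a $\psi$-monochromatic $\vec w\in\ben^v$ with $M\vec w=\vec 0$. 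I would then set $\vec x_1=n\vec w_1$ and $\vec x_t=nc_t\vec w_t$ for $t\geq2$, which now lie in $\ben^{v_t}$ by the choice of $n$.

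Finally I would verify the three requirements, all of which become routine once $\psi$ is in place. The kernel relation follows from $A_1\vec x_1+\cdots+A_k\vec x_k=n\,M\vec w=\vec 0$. For monochromaticity, constancy of the first coordinate of $\psi$ over the entries of $\vec w$ makes $\varphi$ constant on the entries of $n\vec w_1=\vec x_1$, while constancy of the $t$-th coordinate makes $\varphi$ constant on the entries of $nc_t\vec w_t=\vec x_t$; thus each $\vec x_t$ is monochromatic, though the colours may differ with $t$, exactly as the definition permits. The only real idea is this passage from $\varphi$ to the product colouring $\psi$ together with the scaling by $n$.
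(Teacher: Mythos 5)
Your proposal is correct and is essentially the paper's own argument: both proofs refine the given colouring $\varphi$ to a product colouring that also records the colours of the $c_t$-scaled values, clear denominators by a common multiple $n$, apply kernel partition regularity of $(\begin{array}{cccc}A_1&c_2A_2&\ldots&c_kA_k\end{array})$ to the refined colouring, and then read off the blocks. The only cosmetic difference is that you absorb the factor $n$ into the colouring $\psi$ itself, whereas the paper keeps $\psi$ as a refinement of $\varphi$ and invokes Lemma \ref{lemnN} to force the solution vector to lie in $(n\ben)^{v_1+\cdots+v_k}$.
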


\begin{proof} Assume that $c_2,c_3,\ldots,c_k\in\beq^+$ and
$(\begin{array}{ccccc}A_1&c_2A_2&c_3A_3&\ldots&c_kA_k\end{array})$ is KPR.
Let $\varphi$ be a finite colouring of $\ben$ and let $\psi$ be a finite colouring of
$\ben$ with the property that if $\psi(x)=\psi(y)$ then
\begin{itemize}
\item[(1)] $\varphi(x)=\varphi(y)$ and
\item[(2)] if $t\in\{2,3,\ldots,k\}$ and 
$c_tx$ and $c_ty$ are integers, then $\varphi(c_tx)=\varphi(c_ty)$.
\end{itemize}
For each $t\in\nhat{k}$, pick $m_t,n_t\in\ben$ such that $c_t=\frac{m_t}{n_t}$ 
and let $n=\prod_{t=2}^kn_t$.  Pick by Lemma \ref{lemnN}
$\vec z\in(n\ben)^{v_1+v_2+\ldots+v_k}$ which is monochromatic with respect to $\psi$ 
such that $(\begin{array}{ccccc}A_1&c_2A_2&c_3A_3&\ldots&c_kA_k\end{array})\vec z=\vec 0$.
For each $t\in\nhat{k}$, pick $\vec x_t\in(n \ben)^{v_t}$ such that
$$\vec z=\left(\begin{array}{c}\vec x_1\\ \vec x_2\\ \vdots\\ \vec x_k\end{array}\right)\,.$$
Then the entries of $\vec x_1$ are monochromatic with respect to $\varphi$ and
given $t\in\{2,3,\ldots,k\}$, since the entries of $\vec x_t$ are in $n\ben$, we have
that the entries of $c_t\vec x_t$ are monochromatic with respect to $\varphi$.
And $A_1\vec x_1+A_2c_2\vec x_2+\ldots+A_kc_k\vec x_k=\vec 0$.\end{proof}

The rest of this section will be devoted to a proof of the converse of 
Lemma \ref{lemdkpr}.  This proof is somewhat complicated, so we will
first present an informal description of the ideas of the proof for the
case $k=2$ (where we have a given doubly KPR pair $(A,B)$).

There are three key ingredients, two of which have appeared in other papers and
one of which is new.
\begin{itemize}
\item[(1)] The `start base p' colouring. This is used in \cite{HL}.
\item[(2)] Simple facts about linear spans and positive cones being closed 
sets.\hfill\break Again, these have been used in \cite{HL}.
\item[(3)] Looking at linear spans for `all parts of the partition at once'. 
This will be explained below, and it is the `new ingredient'.
\end{itemize}

Let us fix some notation. The columns of $A$ are $\vec a_1,\vec a_2,\ldots,\vec a_v$ and the 
columns of $B$ are $\vec b_1,\vec b_2,\ldots,\vec b_w$.

For a large positive integer $p$ (not necessarily prime), we colour the 
naturals by first two digits and start position $(\mod 2)$, all in the base $p$ 
expansion. So for example if $s$ is $67100200$ and $t$ is $3040567$ then $s$ gets 
colour $(67,1)$ and $t$ gets colour $(30,0)$. So we have  $2p(p-1)$ colours.

For this colouring, there are monochromatic vectors 
$\vec x=\left(\begin{array}{c}x_1\\ \vdots\\ x_v\end{array}\right)$ and 
$\vec y=\left(\begin{array}{c}y_1\\ \vdots\\ y_w\end{array}\right)$
with $A\vec x + B\vec y = \vec 0$. Say all 
the entries of $x$ start with the two digits $d$, where $d$ is between $1$ and $p$ 
(this is just for convenience of writing later on) -- so for example the 
above $s$ would have $d = 6 + \frac{7}{p}$ and the $t$ would have $d = 3$. And say all the 
entries of $y$ start with the two digits $e$.

We have an ordered partition of the index set of the columns of $A$ union the 
index set of the columns of $B$, according to which of the $x_i$ and $y_i$ start 
furthest to the left, which next furthest, and so on. We want to look at 
each set in the partition as its part in $A$ and its part in $B$. So we have a partition $D\cup D' \cup D''\cup\ldots$
of the columns of $A$, and a partition $E \cup E' \cup E''\cup\ldots$ of the columns 
of $B$, such that (and here note that we are allowed to have one of $D$ or $E$ empty 
but not both, and one of $D'$ or $E'$ empty but not both, etc.):
\begin{itemize}
\item[(1)] All the $x_i$ for $i\in D$ and all the $y_i$ for $i\in E$ start in the same 
place as each other
\item[(2)] All the $x_i$ for $i\in D'$ and all the $y_i$ for $i\in E'$ start in the same 
place as each other, and this place is to the right of the start-place for 
the $D,E$ terms by an even number of positions
\item[(3)] and so on.
\end{itemize}

For infinitely many $p$, this ordered partition (strictly speaking, this pair of ordered
partitions) is the same. And from now on we will always assume that our $p$ is
chosen from this infinite set.

We write $\vec s(D)$ for the sum of the columns of $A$ 
indexed by $D$, and also $\vec s(E)$ for the sum of the columns of $B$ indexed by $E$. 
And similarly for $\vec s(D')$ etc.

Consider the equation $A\vec x + B\vec y = \vec 0$. This says that the sum of all 
$x_ia_i$ plus the sum of all $y_ib_i$ is zero. If we consider 
dividing this by a suitable power of $p$, and using the fact that anything 
that starts to the right of the $x_i$ in $D$ actually starts at least two 
places to the right, we see that $d \vec s(D) + e \vec s(E) + \vec \delta = \vec 0$, where $\vec \delta$
denotes a certain sum of the columns of 
$(\begin{array}{cc}A &B\end{array})$, each with a coefficient that is at most $\frac{1}{p}$.

Now, normally one would proceed, by saying that this equation tells us that $\vec s(D)$ plus a multiple 
of $\vec s(E)$ equals $(-1/d) \vec 
\delta$, whence the vector $\vec s(D)$ is arbitrarily close 
to the positive cone on the vector $\vec s(E)$ (namely the set of all non-negative real 
multiples of the vector $\vec s(E)$). But positive cones are closed, hence in fact 
$\vec s(D)$ is a non-positive multiple of $S(E)$. And this 
would give us a first sum of columns of $(\begin{array}{cc}A &cB\end{array})$ that is zero.

However, instead of that, we will stick with that equation, for each 
fixed $p$, which is $d \vec s(D) + e \vec s(E) + \vec \delta = \vec 0$. Call this equation (1).

Now let us consider $\sum x_i a_i + \sum y_i b_i = 0$ 
when we divide by a different power of $p$, to focus on $D'$ and $E'$. We would 
get a term $d \vec s(D') + e \vec s(E')$, and a smaller contribution from columns not 
in $D,D',E,E'$ as well as the terms from $D'$ and $E'$ below the 
two most significant digits (with coefficients at most $\frac{1}{p}$), and also an unknown
contribution from the $x_i$ and $y_i$ that start to the left of where we are, 
namely the $x_i$ from $D$ and the $y_i$ from $E$.

So we have:

$d \vec s(D') + e \vec s(E') + \vec \delta' = \vec v$ for some $\vec v$ in the linear span of the columns 
of $D$ and $E$. Write this span as span$(D,E)$. 

In other words:

$d \vec s(D') + e \vec s(E') + \vec \delta'$   belongs to span$(D,E)$. This is equation (2).

Keep going. Next time we obtain:

$d \vec s(D'') + e \vec s(E'') + \vec \delta''$   belongs to span$(D,D',E,E')$. And so on. (We recall that this is for one fixed $p$. If we vary $p$, we will be changing
$d$ and $e$ and so on.)

We are now ready for the new ingredient. We do not wish to perform any 
limiting in equation (1) or (2). Rather, we want to look inside 
a product space. Let's say that the columns of our matrices live in $V$ 
(namely $\ber^u$). So as to keep the notation manageable, let us assume 
that our partitions are into 3 parts: so we have $D,D',D''$ (but there is no 
$D'''$) and same for $E,E',E''$. We now take the product of our equations. 
So, still for fixed $p$, in the space $V\times V\times V$ we have, combining (1),(2),(3):

$d \big(\vec s(D),\vec s(D'),\vec s(D'')\big)+ e\big(\vec s(E),\vec s(E'),\vec s(E'')\big)$ is very close to
the set $\{\vec 0\}\times \hbox{span}(D,E) \times \hbox{span}(D,D',E,E')$.

Note that this latter set, say $L$, is the linear span of a certain set of 
vectors (such as each vector $(\vec 0,\vec a_i,\vec 0)$ for $i \in D$). Which we can, if we wish, 
also view as the positive cone on (i.e. the non-negative linear 
combinations of) a certain finite set of vectors (namely the vectors we 
have just mentioned and their negatives).

Dividing by $d$, we see that $-\big(\vec s(D),\vec s(D'),\vec s(D'')\big)$ is 
arbitrarily close 
to the positive cone on $L\cup\big\{ \big(\vec s(E),\vec s(E'),\vec s(E'')\big)\big\}$. But positive cones (on 
finite sets of vectors) are closed sets, so, letting $p$ tend to infinity, we 
conclude that:
$-\big(\vec s(D),\vec s(D'),\vec s(D'')\big)$ is in
 the positive cone on $L\cup\big\{ \big(\vec s(E),\vec s(E'),\vec s(E'')\big)\big\}$.

In other words, there exists a nonnegative rational $c$ (switching from reals 
to rationals, which is fine as all coefficients are rationals in our matrices) 
such that:

$\big(\vec s(D),\vec s(D'),\vec s(D'')\big)+c\big(\vec s(E),\vec s(E'),\vec s(E'')\big)$
 belongs to $L$.

Case 1: $c$ is positive. In this case, looking at what $L$ is, we see that $(\begin{array}{cc}A&cB\end{array})$ satisfies the columns condition 
where the first block is $D \cup E$, then $D' \cup E'$, then $D''\cup E''$.

Case 2: $c=0$. This ought to be a trivial case, but in fact we do not know how
to eliminate it directly. Rather, let us
return to where we divided by 
$d$, and instead divide by $e$. In other words, we switch the roles of $A$ and $B$. 
We obtain that for some nonnegative rational $c'$ we have 
$\big(\vec s(E),\vec s(E'),\vec s(E'')\big)+c'\big(\vec s(D),\vec s(D'),\vec s(D'')\big)$
belongs to $L$.
 Again, if $c'$ is nonzero, we are done. So 
the only case left is when $c'=0$. This tells us that the point 
$\big(\vec s(E),\vec s(E'),\vec s(E'')\big)$ also belongs to $L$. But now it follows that for {\it any\/} 
positive rational $c$ at all (indeed, any nonzero $c$) the matrix
$(\begin{array}{cc}A&cB\end{array})$ satisfies the columns condition.
\medskip

Now we present a more formal version of the proof.

\begin{theorem}\label{thmdkpr}  Let $u,k,v_1,v_2,\ldots,v_k\in\ben$ with $k\geq 2$.  
For $t\in\nhat{k}$, let $A_t$ be a $u\times v_t$ matrix with entries from $\beq$.
Then $(A_1,A_2,\ldots,A_k)$ is multiply KPR if and only if there exist
$c_2,c_3,\ldots,c_k\in\beq^+$ such that
$$(\begin{array}{ccccc}A_1&c_2A_2&c_3A_3&\ldots&c_kA_k\end{array})$$ is KPR.\end{theorem}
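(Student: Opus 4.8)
The plan is to prove the nontrivial direction (multiply KPR implies the existence of suitable $c_2,\ldots,c_k$), since the converse is already Lemma \ref{lemdkpr}. The informal sketch lays out the strategy for $k=2$; I would formalise it and then handle general $k$ by an analogous but notationally heavier argument. First I would fix the ``start base $p$'' colouring: each positive integer is coloured by its leading two base-$p$ digits together with the parity (mod $2$) of the position of its leading digit. Applying multiply KPR to this colouring yields, for each $p$, monochromatic vectors $\vec x_1,\ldots,\vec x_k$ with $\sum_t A_t\vec x_t=\vec 0$. Each block $\vec x_t$ has all entries sharing a leading two-digit value $d_t\in[1,p)$ and a common starting parity. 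Since there are only finitely many possible ordered-partition patterns (recording, across all blocks, which entries start furthest left, which next, and so on, together with the block-membership data), I would pass to an infinite set of $p$ on which this pattern is constant. This gives, for each block $t$, an ordered partition of its column-index set into levels $I_t^{(0)},I_t^{(1)},\ldots,I_t^{(\ell)}$, where entries at level $j$ start two-or-more positions to the right of those at level $j-1$.

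Next I would extract, for each fixed $p$, the key chain of equations. Writing $\vec s_t^{(j)}$ for the sum of the columns of $A_t$ indexed by $I_t^{(j)}$, and dividing $\sum_t A_t\vec x_t=\vec 0$ by the appropriate power of $p$ to isolate level $j$, I obtain
\begin{equation}\label{chainproof}
\sum_{t=1}^k d_t\,\vec s_t^{(j)}+\vec\delta^{(j)}\in\hbox{span}\big(\{\vec s_t^{(i)}:i<j,\ t\in\nhat{k}\}\big),
\end{equation}
where $\vec\delta^{(j)}$ collects contributions from digits below the leading two and from columns starting strictly further right, all with coefficients of absolute value at most $1/p$. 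The new ingredient is to refuse to take limits inside each individual equation and instead work in the product space $V^{\ell+1}$ (with $V=\ber^u$). Let $L$ be the fixed subspace $\{\vec 0\}\times\hbox{span}(\hbox{level }0)\times\hbox{span}(\hbox{levels }0,1)\times\cdots$, realised as the span, equivalently the positive cone, of an explicit finite set of vectors of the form $(\vec 0,\ldots,\vec s_t^{(j)},\ldots,\vec 0)$ and their negatives. Stacking the equations \eqref{chainproof} over all levels shows that in $V^{\ell+1}$ the vector $\sum_t d_t\,\vec\sigma_t$, where $\vec\sigma_t=(\vec s_t^{(0)},\ldots,\vec s_t^{(\ell)})$, lies within distance $O(1/p)$ of $L$.

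The heart of the argument is then a normalisation-and-compactness step. Dividing by $d_1$ (all $d_t/d_1$ lie in the bounded interval) and letting $p\to\infty$ along our infinite set, I would use that the positive cone on a finite set of vectors is closed to conclude that $-\vec\sigma_1$ lies in the positive cone generated by $L\cup\{\vec\sigma_2,\ldots,\vec\sigma_k\}$. Hence there exist nonnegative reals, and therefore (since all data are rational) nonnegative rationals $c_2,\ldots,c_k$, with $\vec\sigma_1+\sum_{t\ge 2}c_t\vec\sigma_t\in L$. Reading off what membership in $L$ means level by level recovers exactly the columns condition for $(\begin{array}{ccccc}A_1&c_2A_2&\ldots&c_kA_k\end{array})$, with the partition blocks being the unions of same-level index sets across the matrices. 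I expect the main obstacle to be the degenerate case where some $c_t$ vanish: as the sketch notes for $k=2$, a zero coefficient cannot be excluded directly. The remedy is to rerun the normalisation dividing by a different $d_t$, thereby cycling the role of the ``privileged'' matrix; each such run forces some coefficient to be positive or else places the corresponding $\vec\sigma$ inside $L$ outright, and I would need a careful bookkeeping argument showing that after finitely many such rotations one can choose \emph{all} of $c_2,\ldots,c_k$ strictly positive. Verifying that this case analysis closes for general $k$, rather than merely $k=2$, is the part I anticipate requiring the most care.
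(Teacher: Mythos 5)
Your proposal follows the paper's argument essentially step for step --- the start-base-$p$ colouring, pinning down the ordered partition along an infinite set of $p$, stacking the level-by-level equations in the product space $V^{\ell+1}$, and using closedness of the finitely generated positive cone to land $-\vec\sigma_1$ in the cone on $L\cup\{\vec\sigma_2,\ldots,\vec\sigma_k\}$. (One small slip: the ratios $d_t/d_1$ need not be bounded as $p\to\infty$, since $d_{t,p}$ ranges over $[1,p)$; but this does not matter, because the argument never takes limits of those coefficients --- it only needs that the error term $\vec\delta/d_1$ tends to $0$ while the whole right-hand side stays in a fixed closed cone.)

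The genuine gap is exactly the one you flag at the end: you do not actually close the case where some of the $c_t$ vanish, and for $k\geq 3$ the ``rotate the privileged matrix and do a case analysis'' plan does not obviously terminate --- different runs can zero out different coordinates, and it is not clear how to merge them. The paper's resolution is not a case analysis but a one-line summation trick. For each $r\in\nhat{k}$, running the argument with matrix $r$ privileged produces non-negative rationals $b_{r,1},\ldots,b_{r,k}$ with $b_{r,r}=1$ and $-\sum_{t=1}^k b_{r,t}\vec S_t\in K$, where $K$ is the cone generated by the level-shifted column vectors alone (not including any $\vec S_t$). Since $K$ is closed under addition, summing these $k$ relations gives $-\sum_{t=1}^k\big(\sum_{r=1}^k b_{r,t}\big)\vec S_t\in K$, and each aggregate coefficient satisfies $\sum_{r=1}^k b_{r,t}\geq b_{t,t}=1>0$. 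Dividing by $\sum_r b_{r,1}$ then yields strictly positive rationals $c_2,\ldots,c_k$ with $-\vec S_1-\sum_{t=2}^k c_t\vec S_t\in K$, which is precisely the columns condition for $(\begin{array}{cccc}A_1&c_2A_2&\ldots&c_kA_k\end{array})$ with blocks $J(1),\ldots,J(m)$. You would need to supply this (or an equivalent) step for your proof to be complete.
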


\begin{proof} The sufficiency is Lemma \ref{lemdkpr}.  We shall prove the necessity.

 For $p\in\ben\setminus\{1\}$ define $g_p:\ben\to\omega$ by $g_p(x)=\max\{t\in\omega:
p^t\leq x\}$.  Define 
$\tau_p:\omega\times\ben\to\ohat{p-1}$ by $x=\sum_{j=0}^{g_p(x)}\tau_p(j,x)p^j$, letting
$\tau_p(j,x)=0$ if $j>g_p(x)$.  Define a finite colouring $\ggamma_p$ of $\ben$ so that
for $x,y\in\ben$, $\ggamma_p(x)=\ggamma_p(y)$ if and only if
\begin{itemize}
\item[(1)] $g_p(x)\equiv g_p(y)\ (\mod\ 2)$,
\item[(2)] $\tau_p(g_p(x),x)=\tau_p(g_p(y),y)$, and
\item[(3)] $\tau_p(g_p(x)-1,x)=\tau_p(g_p(y)-1,y)$.
\end{itemize}

For $p\in\ben\setminus\{1\}$ and $t\in\nhat{k}$, pick $\vec x_{t,p}\in\ben^{v_t}$ such
that $\vec x_{t,p}$ is monochromatic with respect to $\ggamma_p$ and
$$A_1\vec x_{1,p}+A_2\vec x_{2,p}+\ldots+A_k\vec x_{k,p}=\vec 0\,.$$
Pick $m_p\in\ben$,  $\mu_p(1)>\mu_p(2)>\ldots>\mu_p(m_p)$, and,
for each $t\in\nhat{k}$, pick pairwise disjoint sets $I_{t,p}(1),I_{t,p}(2),\ldots
I_{t,p}(m_p)$ such that \begin{itemize}
\item[(1)] for each $t\in\nhat{k}$, $\bigcup_{i=1}^{m_p}I_{t,p}(i)=\{t\}\times\nhat{v_t}$,
\item[(2)] for each $i\in\nhat{m_p}$, $\bigcup_{t=1}^k I_{t,p}(i)\neq\emp$, and
\item[(3)] for each $i\in\nhat{m_p}$ and each $(t,j)\in\bigcup_{s=1}^kI_{s,p}(i)$,
$g_p(x_{t,p,j})=\mu_p(i)$.\end{itemize}

Pick an infinite set $P\subseteq\ben$, $m\in\ben$, and for each $t\in\nhat{k}$ and 
each $i\in\nhat{m}$, $I_t(i)$, such that for each $p\in P$, $m_p=m$, and for each
$t\in\nhat{k}$ and each $i\in\nhat{m}$, $I_{t,p}(i)=I_t(i)$.

By reordering the columns of each $A_t$, and correspondingly reordering the entries
of each $\vec x_{t,p}$, we may presume that we have for each $t\in\nhat{k}$,
$0=\alpha_t(0)\leq \alpha_t(1)\leq\ldots\leq\alpha_t(m)=v_t$ such that
for each $i\in\nhat{m}$ and each $t\in\nhat{k}$, $I_t(i)=\{(t,j):\alpha_t(i-1)<j\leq\alpha_t(i)\}$.
Thus, if $p\in P$, $i\in\nhat{m}$, $t\in\nhat{k}$, and $\alpha_t(i-1)<j\leq\alpha_t(i)$,
then $g_p(x_{t,p,j})=\mu_p(i)$.  After the reordering, denote the columns of $A_t$ by
$\vec a_{t,1},\vec a_{t,2},\ldots,\vec a_{t,v_t}$.

For $i\in\nhat{m}$, let $J(i)=\bigcup_{t=1}^k I_t(i)$ and note that 
$\{J(1),J(2),\ldots,\break
J(k)\}$ is a partition of the indices of the columns of
$(\begin{array}{ccccc}A_1&A_2&A_3&\ldots&A_k\end{array})$.

For each $i\in\nhat{m}$ and each $t\in\nhat{k}$, let 
$$\textstyle \vec s_t(i)=\sum_{j=\alpha_t(i-1)+1}^{\alpha_t(i)}\vec a_{t,j}$$
and let $\vec S_t=\big(\vec s_t(1),\vec s_t(2),\ldots,\vec s_t(m)\big)$.  For each
$p\in P$ and $t\in\nhat{k}$, let
$$\textstyle d_{t,p}=\tau_p(g_p(x_{t,p,1}),x_{t,p,1})+\frac{1}{p}\tau_p(g_p(x_{t,p,1})-1,x_{t,p,1})\,.$$
Note that for any $j\in\nhat{v_t}$,
$$d_{t,p}=\tau_p(g_p(x_{t,p,j}),x_{t,p,j})+\frac{1}{p}\tau_p(g_p(x_{t,p,j})-1,x_{t,p,j})\,,$$ 
because $\vec x_{t,p}$ is monochromatic with respect to $\ggamma_p$.

Note that, given $i\in\nhat{m}$, $t\in\nhat{k}$, and $\alpha_t(i-1)<j\leq \alpha_t(i)$,
$\textstyle x_{t,p,j}=p^{\mu_p(i)}d_{t,p}+\sum_{l=0}^{\mu_p(i)-2}\tau_p(l,x_{t,p,j})p^l$.
For $p\in P$ and $i\in\nhat{m}$, define 
$$\begin{array}{rl}\textstyle \vec{sm}_p(i)=\sum_{t=1}^k\left(\right.
&\textstyle\sum_{j=\alpha_t(i-1)+1}^{\alpha_t(i)}\vec a_{t,p}
\sum_{l=0}^{\mu_p(i)-2}\tau_p(l,x_{t,p,j})p^{l-\mu_p(i)}+\\
&\textstyle\sum_{j=\alpha_t(i)+1}^{v_t}\vec a_{t,p}x_{t,p,j}p^{-\mu_p(i)}\left.\right)\end{array}\,.$$

Note that if $M=\max\big\{||\vec a_{t,j}||:t\in\nhat{k}\hbox{ and }j\in\nhat{v_t}\big\}$,
then for each $i
\in\nhat{m}$, $||\vec{sm}_p(i)||\leq\frac{M}{p}\sum_{t=1}^kv_t$ because
$g_p(x_{t,p,j})\leq\mu_p(i)-2$ if $j>\alpha_t(i)$.

For the next three paragraphs, let $p\in P$ be fixed.
We have that $$\textstyle \sum_{t=1}^k\sum_{j=1}^{v_t}x_{t,p,j}\vec a_{t,j}=\vec 0\,.$$
Thus dividing by $p^{\mu_p(1)}$ we have 
$\textstyle\sum_{t=1}^k d_{t,p}\vec s_t(1) +\vec {sm}_p(1)=\vec 0$.

Now let $i\in\nhat{m}$.  Dividing by $\mu_p(i)$, we have
$$\textstyle \sum_{t=1}^k\sum_{j=1}^{\alpha_t(i-1)}\vec a_{t,j}x_{t,p,j}p^{-\mu_p(i)}
+\sum_{t=1}^k d_{t,p}\vec s_t(i) +\vec {sm}_p(i)=\vec 0\,.$$

Thus $-\vec s_1(1)-\frac{1}{d_{1,p}}\vec{sm}_p(1)=\sum_{t=2}^k\frac{d_{t,p}}{d_{1,p}}\vec s_t(1)$
and for $i\in\{2,3,\ldots,m\}$,  
$$\textstyle -\vec s_1(i)-\frac{1}{d_{1,p}}\vec{sm}_p(i)=
\sum_{t=1}^k\sum_{j=1}^{\alpha_t(i-1)}\frac{x_{t,p,j}}{d_{1,p}}p^{-\mu_p(i)}\vec a_{t,j}
+\sum_{t=2}^k\frac{d_{t,p}}{d_{1,p}}\vec s_t(i)\,.$$

For $i\in\{2,3,\ldots,m\}$, let 
$$\begin{array}{rl}C_i=\{\vec w\in\bigtimes_{\delta=1}^m\ber^u:
&\vec w_i\in\big\{\vec a_{t,j}:t\in\nhat{k}\hbox{ and }\\
&\hskip 58 pt j\in\nhat{\alpha_t(i-1)}\big\}\\
&\hbox{and if }\delta\in\nhat{m}\setminus\{i\}\hbox{, then }\vec w_\delta=\vec 0\}\,.\end{array}$$

Let $K$ be the positive cone of $\bigcup_{i=2}^mC_i$, that is, all linear
combinations of members of $\bigcup_{i=2}^mC_i$ with non-negative real coefficients.
Notice that for $(\vec w_1,\vec w_2,\ldots,\vec w_m)\in\bigtimes_{\delta=1}^m\ber^u$,
$(\vec w_1,\vec w_2,\ldots,\vec w_m)\in K$ if and only if $\vec w_1=\vec 0$ and
for each $i\in\{2,3,\ldots,m\}$, $\vec w_i$ is a linear combination with non-negative coefficients of
$\{\vec a_{t,j}:(t,j)\in\bigcup_{l=1}^{i-1}J(l)\}$.

Let $L$ be the positive cone of $\{\vec S_2,\vec S_3,\ldots,\vec S_k\}\cup
\bigcup_{i=2}^mC_i$.  We then have that for each $p\in P$,
$-\vec S_1-\frac{1}{d_{1,p}}\big(\vec{sm}_p(1),\vec{sm}_p(2),\ldots,\vec{sm}_p(m)\big)\in L$.
Now $L$ is closed in $\bigtimes_{\delta=1}^m\ber^u$ and for each 
$p\in P$, $\big ||\big(\vec{sm}_p(1),\vec{sm}_p(2),\ldots,\vec{sm}_p(m)\big)\big||
\leq \frac{mM}{p}\sum_{t=1}^kv_t$.  
Therefore $-\vec S_1\in L$.  And since all entries of all 
of the vectors generating $L$ are rational, in fact
$-\vec S_1$ is a linear combination of members of
$\{\vec S_2,\vec S_3,\ldots,\vec S_k\}\cup
\bigcup_{i=2}^mC_i$ with all coefficients non-negative rational
numbers. (See, for example, \cite[Lemma 15.23]{HS}.) Thus there exist
non-negative rational numbers $b_{1,2},b_{1,3},\ldots,b_{1,k}$
such that $-\vec S_1-\sum_{t=2}^kb_{1,t}\vec S_t\in K$.  Letting 
$b_{1,1}=1$, we have $-\sum_{t=1}^kb_{1,t}\vec S_t\in K$.

Similarly, for each $r\in\{2,3,\ldots,k\}$ there exist non-negative
rationals $b_{r,1},\break
b_{r,2},\ldots,b_{r,k}$ with $b_{r,r}=1$ 
such that $-\sum_{t=1}^kb_{r,t}\vec S_t\in K$.  

Thus we have $-\sum_{r=1}^k\sum_{t=1}^kb_{r,t}\vec S_t\in K$ so
$-\sum_{t=1}^k\sum_{r=1}^kb_{r,t}\vec S_t\in K$.
Since each $b_{r,t}\geq 0$ and $b_{r,r}=1$, we have for each
$t$ that $\sum_{r=1}^kb_{r,t}\geq 1$.
For $t\in\{2,3,\ldots,k\}$, let
$$c_t=\frac{\textstyle\sum_{r=1}^k b_{r,t}}{\textstyle\sum_{r=1}^k b_{r,1}}\,.$$
Then $-\vec S_1-\sum_{t=2}^kc_t\vec S_t\in K$ and in fact is
a linear combination with non-negative rational coefficients of
members of $\bigcup_{i=2}^mC_i$.  Recalling the description
of what it means to be in $K$, we see that 
$(\begin{array}{ccccc}A_1&c_2A_2&c_3A_3&\ldots&c_kA_k\end{array})$ satisfies
the columns condition with column partition 
$\{J(1),J(2),\ldots,J(m)\}$. \end{proof}

Notice the amusing fact that in each case, the proof establishes that the 
columns condition is satisfied with the sum of each set
of columns a linear combination of the previous columns using no positive coefficients at all.

\section{Some corollaries}

An immediate corollary of Theorem \ref{thmdkpr} is the following
computable characterisation of doubly IPR.
\begin{corollary}\label{cordiprone} Let $u,v\in\ben$ and let $A$ be a 
$u\times v$ matrix with entries from $\beq$.  Then $A$ is doubly IPR if and only
if there exists $b\in\beq^{+}$ such that the matrix
$(\begin{array}{cc}A&-bI_u\end{array})$ is KPR.\end{corollary}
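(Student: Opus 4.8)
The plan is to derive this as an immediate consequence of Theorem \ref{thmdkpr}, specialised to $k=2$ and to the particular pair $(A,-I_u)$. The bridge between image and kernel partition regularity was already observed in the introduction: a $u\times v$ matrix $A$ is doubly IPR if and only if, whenever $\ben$ is finitely coloured, there exist monochromatic $\vec x\in\ben^v$ and monochromatic $\vec y\in\ben^u$ with $A\vec x-I_u\vec y=\vec 0$; equivalently, $A$ is doubly IPR if and only if the pair $(A,-I_u)$ is doubly KPR. So the first step is simply to record this reformulation, which lets me replace the image-regularity condition on $A$ by a kernel-regularity condition on a pair.

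Next I would apply Theorem \ref{thmdkpr} with $k=2$, $A_1=A$, and $A_2=-I_u$. In this instance ``multiply KPR'' is precisely ``doubly KPR of the pair $(A_1,A_2)$'', so the theorem asserts that $(A,-I_u)$ is doubly KPR if and only if there exists $c_2\in\beq^{+}$ for which $(\begin{array}{cc}A&c_2(-I_u)\end{array})$ is KPR. Since $c_2(-I_u)=-c_2I_u$, setting $b=c_2\in\beq^{+}$ turns this matrix into $(\begin{array}{cc}A&-bI_u\end{array})$. Combining this equivalence with the reformulation of the previous paragraph gives exactly the claimed statement: $A$ is doubly IPR if and only if there is some $b\in\beq^{+}$ with $(\begin{array}{cc}A&-bI_u\end{array})$ KPR.

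There is essentially no obstacle here, as the entire analytic content lives in Theorem \ref{thmdkpr}; the corollary is a matter of translation. The only point that warrants a moment's care is the justification of the first paragraph's equivalence---namely that one may take $\vec y=A\vec x$, so that $A\vec x$ being monochromatic with entries in $\ben$ is exactly what is needed to produce a monochromatic $\vec y$ solving $A\vec x-I_u\vec y=\vec 0$, and conversely. This is the ``very simple fact'' already stated in the introduction, so in the write-up I would merely cite it rather than reprove it.
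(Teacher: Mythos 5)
Your proposal is correct and is essentially identical to the paper's own proof, which likewise just cites the introduction's observation that $A$ is doubly IPR if and only if $(A,-I_u)$ is doubly KPR and then invokes Theorem \ref{thmdkpr} with $k=2$. Your write-up merely spells out the substitution $A_1=A$, $A_2=-I_u$, $b=c_2$ that the paper leaves implicit.
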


\begin{proof} We know that $A$ is doubly IPR if and only if the
pair $(A, -I_u)$ is doubly KPR, so Theorem \ref{thmdkpr} applies. \end{proof}

One of the characterisations of image partition regularity is the
following.

\begin{theorem}\label{thmiprchar} Let $u,v\in\ben$ and let $A$ be a 
$u\times v$ matrix with entries from $\beq$.  Then $A$ is  IPR if and only
if there exist $b_1,b_2,\ldots ,b_v\in\beq^{+}$ such that the matrix
$$\left(\begin{array}{c} \begin{array}{ccccc}
b_1&0&0&\ldots&0\\
0&b_2&0&\ldots&0\\
0&0&b_3&\ldots&0\\
\vdots&\vdots&\vdots&\ddots&\vdots\\
0&0&0&\cdots&b_v\end{array}\\
\ \\
A\end{array}\right)$$
is image partition regular.
\end{theorem}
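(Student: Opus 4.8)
The plan is to treat the two directions very differently, since they are of quite different depth. The \emph{sufficiency} (the ``if'') is immediate. Suppose $b_1,b_2,\ldots,b_v\in\beq^{+}$ are such that the stacked matrix $M=\left(\begin{array}{c}D\\A\end{array}\right)$, with $D=\mathrm{diag}(b_1,\ldots,b_v)$, is IPR. Given any finite colouring of $\ben$, pick $\vec x\in\ben^v$ with the entries of $M\vec x$ monochromatic. The bottom $u$ entries of $M\vec x$ are precisely the entries of $A\vec x$, so these are monochromatic, and hence $A$ is IPR. (The top $v$ entries $b_1x_1,\ldots,b_vx_v$ are then monochromatic of the same colour as well, but this extra information is simply discarded here.)

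The substance is the \emph{necessity}: producing the scalars $b_i$ from the bare hypothesis that $A$ is IPR. Here I would appeal to the computable characterisations of image partition regularity recalled in the introduction (\cite[Theorem 2.10]{HLS}, \cite[Theorem 15.24]{HS}): from $A$ being IPR one extracts positive rationals $b_1,\ldots,b_v$ together with the structural witness showing that the coordinates $x_i$, after scaling by $b_i$, can be forced into the \emph{same} colour class as the entries of $A\vec x$, which is exactly the assertion that $M$ is IPR. If instead one wants an argument in the self-contained style of Section~2, I would rerun the start-base-$p$ colouring $\ggamma_p$: for each $p$ in a suitable infinite set $P$, IPR of $A$ yields $\vec x_p\in\ben^{v}$ with the entries of $A\vec x_p$ all of one $\ggamma_p$-colour; passing to a subsequence along which the pattern of leading base-$p$ positions of the $x_{p,i}$ stabilises, together with their relation to the common leading two digits and start-parity of the entries of $A\vec x_p$, and then letting $p\to\infty$ using that positive cones on finite sets of vectors are closed, extracts the limiting ratios that are the candidate $b_i$. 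One then still has to verify that $M$, with these particular $b_i$, is IPR; the point is that the $b_i$ are chosen so that the diagonal rows line up with the first-entries scale of the rows of $A$, which is what makes $M$ partition regular.

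The hard part is thus this necessity, and I expect the extraction of the $b_i$ to be the main obstacle: it is genuinely the depth of the classical IPR theory and does not follow from Rado's Theorem~\ref{Rado} alone. Two features make it delicate. First, $M$ itself never satisfies the columns condition, since the diagonal block forces the sum of the columns in any nonempty initial block $I_1$ to have nonzero top part, so $\sum_{i\in I_1}\vec c_i\neq\vec 0$; one therefore cannot simply apply Rado to $M$. Second, unlike in the kernel setting the condition ``the entries of $A\vec x$ are monochromatic'' is one-sided, and one cannot route the proof through Corollary~\ref{cordiprone}, because IPR of $M$ is strictly weaker than double IPR of $M$. For instance, with $A=\mathrm{diag}(1,2)$ and $(b_1,b_2)=(1,2)$ the stacked matrix has image $(x_1,2x_2,x_1,2x_2)$, which is monochromatic exactly when $A$ is IPR, so $M$ is IPR; yet $M$ is not doubly IPR, as the start-position-mod-$2$ colouring shows. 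Hence neither Rado's Theorem nor the doubly kernel partition regular machinery of Section~2 delivers the necessity directly, and it must instead call on the image partition regular structure theory.
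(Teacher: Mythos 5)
Your proposal is correct and takes essentially the same route as the paper, whose entire proof of Theorem~\ref{thmiprchar} is the citation \cite[Theorem 2.10]{HLS}: you supply the (immediate) sufficiency argument and defer the necessity to that same reference. The optional base-$p$ sketch is too vague to stand on its own but is not load-bearing, and your side observations---that the stacked matrix never satisfies the columns condition and that its being IPR is strictly weaker than its being doubly IPR---are accurate.
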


\begin{proof} \cite[Theorem 2.10]{HLS}.\end{proof}

We show now, as a corollary to Theorem \ref{thmdkpr}, that
$A$ is doubly IPR if and only if one can choose $b_1=b_2=\ldots=b_v$
in Theorem \ref{thmiprchar}.

\begin{corollary}\label{cordiprtwo} Let $u,v\in\ben$ and let $A$ be a 
$u\times v$ matrix with entries from $\beq$.  Then $A$ is doubly IPR if and only
if there exists $b\in\beq^{+}$ such that the matrix
$$\left(\begin{array}{c}
bI_v \\
A\end{array}\right)$$
is image partition regular.\end{corollary}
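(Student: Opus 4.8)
The plan is to deduce both implications from Corollary \ref{cordiprone}, which already turns the double image partition regularity of $A$ into the kernel partition regularity of $(\begin{array}{cc}A&-cI_u\end{array})$ for some $c\in\beq^+$, together with two short recolouring arguments of the kind used in the proof of Lemma \ref{lemnN}. Throughout, the point to keep in mind is that $\left(\begin{array}{c}bI_v\\ A\end{array}\right)$ is IPR exactly when, for every finite colouring, one can find $\vec x\in\ben^v$ for which the numbers $bx_1,\ldots,bx_v$ and $(A\vec x)_1,\ldots,(A\vec x)_u$ all receive a single colour. This differs from double IPR only in that double IPR permits $\vec x$ and $A\vec x$ to carry two different colours, and reconciling that difference is where Corollary \ref{cordiprone} does the real work.

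For the forward direction I would suppose $A$ is doubly IPR and use Corollary \ref{cordiprone} to fix $c\in\beq^+$ with $(\begin{array}{cc}A&-cI_u\end{array})$ KPR; the goal is then to show that $\left(\begin{array}{c}cI_v\\ A\end{array}\right)$ is IPR, so that $b=c$ works. Write $c=p/q$ in lowest terms and let $\varphi$ be an arbitrary finite colouring of $\ben$. Define $\psi$ by $\psi(s)=\varphi(cs)$ whenever $q\mid s$ (so that $cs\in\ben$), and $\psi(s)=0$ otherwise. Applying Lemma \ref{lemnN} to the KPR matrix $(\begin{array}{cc}A&-cI_u\end{array})$ with $n=q$ and the colouring $\psi$ produces a $\psi$-monochromatic vector $\left(\begin{array}{c}\vec x\\ \vec y\end{array}\right)\in(q\ben)^{v+u}$ satisfying $A\vec x=c\vec y$. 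Since every entry lies in $q\ben$, we have $\psi(x_i)=\varphi(cx_i)$ and $\psi(y_j)=\varphi(cy_j)$, so all of the integers $cx_1,\ldots,cx_v,cy_1,\ldots,cy_u$ share one $\varphi$-colour. As $(A\vec x)_j=cy_j$, the entries of $\left(\begin{array}{c}c\vec x\\ A\vec x\end{array}\right)$ are exactly these integers, and hence $\left(\begin{array}{c}cI_v\\ A\end{array}\right)$ is IPR.

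For the reverse direction I would suppose $\left(\begin{array}{c}bI_v\\ A\end{array}\right)$ is IPR, write $b=p/q$ in lowest terms, and let $\varphi$ be given. Here the natural move is to refine $\varphi$ to the two-coordinate colouring $\psi(s)=\big(\varphi(s),\rho(s)\big)$, where $\rho(s)=\varphi(s/b)$ when $s/b\in\ben$ and $\rho(s)=0$ otherwise. Applying IPR of $\left(\begin{array}{c}bI_v\\ A\end{array}\right)$ to $\psi$ yields $\vec x\in\ben^v$ with $bx_1,\ldots,bx_v,(A\vec x)_1,\ldots,(A\vec x)_u$ all $\psi$-monochromatic; the first coordinate of $\psi$ forces $A\vec x$ to be $\varphi$-monochromatic, while the second coordinate, read off at the numbers $bx_i$ (each of which satisfies $(bx_i)/b=x_i\in\ben$), forces $\vec x$ itself to be $\varphi$-monochromatic, so that $A$ is doubly IPR. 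I expect the only genuine obstacle to be the forward direction's demand that $\vec x$ and $A\vec x$ be monochromatic \emph{in the same colour}: this is exactly the strengthening provided by the rigid kernel relation $A\vec x=c\vec y$ extracted from Corollary \ref{cordiprone}, and once that relation is available the rest is the bookkeeping of the two recolourings above.
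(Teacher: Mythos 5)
Your proof is correct, and it shares the paper's basic strategy---reduce to Corollary \ref{cordiprone} and use Lemma \ref{lemnN} to handle the divisibility issue---but the two directions are organised differently enough to be worth comparing. The paper proves, for each \emph{fixed} $b\in\beq^{+}$, the equivalence ``$(\begin{array}{cc}A&-bI_u\end{array})$ is KPR if and only if $\left(\begin{array}{c}bI_v\\ A\end{array}\right)$ is IPR'' and then cites Corollary \ref{cordiprone} once. For KPR implies IPR it applies Lemma \ref{lemnN} with $n$ equal to the \emph{numerator} of $b$, so that the monochromatic kernel vector $\vec z=\left(\begin{array}{c}\vec w\\ \vec y\end{array}\right)$ is literally the image vector $\left(\begin{array}{c}bI_v\\ A\end{array}\right)\left(\frac{1}{b}\vec w\right)$ and no auxiliary colouring is needed; you instead take $n$ to be the denominator and precompose the colouring with multiplication by $c$, which costs one extra recolouring but is equally valid. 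In the converse direction the paper simply observes that a monochromatic image vector $\left(\begin{array}{c}b\vec x\\ A\vec x\end{array}\right)$ is itself a monochromatic kernel vector of $(\begin{array}{cc}A&-bI_u\end{array})$ and appeals to Corollary \ref{cordiprone} again, whereas your product colouring $\psi(s)=\big(\varphi(s),\rho(s)\big)$ derives doubly IPR directly from IPR of the stacked matrix without passing through kernel partition regularity at all. The paper's route is slightly tidier because the single biconditional does both jobs at once; yours has the mild advantage that the easy direction is self-contained and makes no use of Rado-type machinery.
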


\begin{proof} Using Corollary \ref{cordiprone}, we show that for $b\in\beq^{+}$,
$(\begin{array}{cc}A&-bI_u\end{array})$ is KPR if and only if 
$\left(\begin{array}{c}bI_v\\ A\end{array}\right)$ is IPR.  So let $b\in\beq^+$ be given.

For sufficiency, let $\ben$ be finitely coloured and pick $\vec x\in\ben^v$ such that
$\vec z=\left(\begin{array}{c}bI_v\\ A\end{array}\right)\vec x$ is monochromatic.
Then $\vec z=\left(\begin{array}{c}b\vec x\\ A\vec x\end{array}\right)$
so $(\begin{array}{cc}A&-bI_u\end{array})\vec z=bA\vec x-bA\vec x=\vec 0$.

For necessity, pick $m,n\in\ben$ such that $b=\frac{m}{n}$.
By Lemma \ref{lemnN}, pick monochromatic $\vec z\in (m\ben)^{v+u}$
such that $(\begin{array}{cc}A&-bI_u\end{array})\vec z=\vec 0$.  Pick $\vec w\in(m\ben)^v$
and $\vec y\in(m\ben)^u$ such that $\vec z=\left(\begin{array}{c}\vec w\\ \vec y\end{array}\right)$
and let $\vec x=\frac{1}{b}\vec w$.  Since the entries of $\vec w$ are multiples of $m$, 
$\vec x\in\ben^v$. Since $A\vec w-b\vec y=\vec 0$ we have $b\vec y=A\vec w=bA\vec x$ so
$\vec y=A\vec x$. Therefore $$\left(\begin{array}{c}bI_v\\ A\end{array}\right)\vec x=
\left(\begin{array}{c}b\vec x\\ A\vec x\end{array}\right)=
\left(\begin{array}{c}\vec w\\ \vec y\end{array}\right)=\vec z\,.$$
\end{proof}

 We briefly consider the following question. If the entries of $A$ are integers and $A$ is doubly IPR, must there exist a 
positive integer $b$ such that $(\begin{array}{cc}A&-bI_u\end{array})$
is KPR? In fact, this need not be the case.

\begin{theorem}\label{notint} There is a $2\times 3$ matrix $A$ which is doubly IPR but such that there does
not exist a positive integer $b$ such that $(\begin{array}{cc}A&-bI_u\end{array})$
is KPR.
\end{theorem}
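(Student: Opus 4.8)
The plan is to exhibit an explicit matrix and then, via Rado's Theorem together with Corollary \ref{cordiprone}, compute the \emph{exact} set of $b$ for which $(\begin{array}{cc}A&-bI_2\end{array})$ is KPR. I would take
\[A=\left(\begin{array}{ccc}1&-1&2\\3&-3&1\end{array}\right),\]
whose columns are $\vec a_1=(1,3)^{T}$, $\vec a_2=-\vec a_1$, and $\vec a_3=(2,1)^{T}$. By Corollary \ref{cordiprone}, $A$ is doubly IPR if and only if $(\begin{array}{cc}A&-bI_2\end{array})$ is KPR for some $b\in\beq^{+}$, so it suffices to show that the set of $b$ for which this matrix satisfies the columns condition is nonempty on the positive side yet contains no positive integer. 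I expect this set to come out as exactly $\{\frac{5}{3},\frac{5}{2}\}$ together with the negative value $-5$; since $\frac{5}{3}$ lies in it, $A$ is doubly IPR, while no positive integer does, which is precisely the assertion.

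The first step is to pin down the first block of any partition witnessing the columns condition. Writing the five columns of $M_b=(\begin{array}{cc}A&-bI_2\end{array})$ as $\vec a_1,\vec a_2,\vec a_3,(-b,0)^{T},(0,-b)^{T}$, I would verify that for every $b\neq 0$ the only nonempty subset of columns summing to $\vec 0$ is $\{1,2\}$. This is a short finite check: a zero-sum subset using neither identity column forces, from the two coordinate equations, the coefficient of $\vec a_3$ to vanish and those of $\vec a_1,\vec a_2$ to agree (yielding $\{1,2\}$), while any zero-sum subset using one or both identity columns forces $b=0$. Consequently every columns-condition partition must begin with $I_1=\{1,2\}$, after which the accumulated span is the line $\ber\cdot(1,3)^{T}$.

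The second step is to run through the continuations. Since $\vec a_3$ is independent of $(1,3)^{T}$, as soon as a block contains $\vec a_3$ the span jumps to all of $\ber^{2}$ and every later block is automatically admissible; so the entire constraint is that the block $I_2$ following $\{1,2\}$ have its column-sum on the line $\ber\cdot(1,3)^{T}$. Among the nonempty subsets of $\{3,4,5\}$, the blocks $\{3\},\{4\},\{5\},\{4,5\}$ admit no nonzero solution, whereas
\[\{3,4\}\rightsquigarrow b=\frac{5}{3},\qquad \{3,5\}\rightsquigarrow b=-5,\qquad \{3,4,5\}\rightsquigarrow b=\frac{5}{2}.\]
This produces the claimed set of admissible $b$. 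Taking $b=\frac{5}{3}$ shows via Corollary \ref{cordiprone} that $A$ is doubly IPR, and since none of $\frac{5}{3},\frac{5}{2},-5$ is a positive integer, no positive integer $b$ makes $(\begin{array}{cc}A&-bI_2\end{array})$ KPR. The only genuine work — and the step most prone to error — is the exhaustiveness of the two finite enumerations (first all zero-sum subsets, then all admissible continuations); I would write both out as explicit case checks, so that no partition, and in particular none yielding an integer $b$, is overlooked.
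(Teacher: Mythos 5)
Your proposal is correct and follows essentially the same route as the paper: both exhibit an explicit $2\times 3$ matrix whose first two columns are negatives of each other (forcing $I_1=\{1,2\}$) and then enumerate, via Rado's Theorem and the columns condition, the exact finite set of admissible $b$, finding that it contains a positive rational but no positive integer. The paper uses $\left(\begin{array}{ccc}4&-4&2\\5&-5&3\end{array}\right)$ with admissible positive value $b=\frac12$ where you use a different matrix yielding $b\in\{\frac53,\frac52\}$, but the argument is the same, and your case analysis checks out.
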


\begin{proof} Let $A=\left(\begin{array}{ccc}4&-4&2\\
5&-5&3\end{array}\right)$.  Then the matrix $\left(\begin{array}{ccccc}4&-4&2&-\frac{1}{2}&0\\
5&-5&3&0&-\frac{1}{2}\end{array}\right)$ satisfies the columns condition (with
$I_1=\{1,2\}$, $I_2=\{3,5\}$ and $I_3=\{4\}$) so by Corollary \ref{cordiprone},
$A$ is doubly IPR.  

The only value of $b$ other than $b=\frac{1}{2}$ for which
$\left(\begin{array}{ccccc}4&-4&2&-b&0\\
5&-5&3&0&-b\end{array}\right)$ satisfies the columns condition 
is $b=-2$.  \end{proof}

However, if we demand that no nonempty set of columns of $A$ sums to $\vec 0$,
we do get the desired result.

\begin{corollary}\label{intb} Let $u,v\in\ben$ and let $A$ be a doubly IPR  $u\times v$ matrix
with entries from $\bez$.  If no nonempty set of columns of $A$ sum to $\vec 0$,
then there exists a positive integer $b$ such that $(\begin{array}{cc}A&-bI_u\end{array})$
is KPR.\end{corollary}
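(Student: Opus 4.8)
The plan is to start from the rational version already in hand and to show that the no-zero-sum hypothesis forces the denominator to disappear. By Corollary \ref{cordiprone}, since $A$ is doubly IPR there is some $b\in\beq^+$ such that $(\begin{array}{cc}A&-bI_u\end{array})$ is KPR. By Rado's Theorem (Theorem \ref{Rado}) this matrix satisfies the columns condition, so there is a partition $\{I_1,I_2,\ldots,I_m\}$ of the index set $\nhat{v+u}$ of its columns with $\sum_{i\in I_1}\vec c_i=\vec 0$, where the columns $\vec c_i$ are $\vec a_1,\ldots,\vec a_v$ (coming from $A$) followed by $-b\vec e_1,\ldots,-b\vec e_u$ (coming from $-bI_u$). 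I claim that this same $b$ is automatically a positive integer, and the whole argument concentrates on the first block $I_1$.

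First I would split $I_1$ into the part $S\subseteq\nhat v$ coming from the columns of $A$ and the part $T\subseteq\nhat u$ indexing the columns $-b\vec e_i$ that occur in $I_1$. The requirement $\sum_{i\in I_1}\vec c_i=\vec 0$ then reads
$$\textstyle\sum_{i\in S}\vec a_i=b\sum_{i\in T}\vec e_i\,.$$
The next step is to rule out the degenerate possibilities. If $T=\emp$ then the right-hand side is $\vec 0$; since $I_1$ is a nonempty block of a partition and $I_1=S$ in this case, this would exhibit a nonempty set of columns of $A$ summing to $\vec 0$, contrary to hypothesis. Likewise $S=\emp$ is impossible, for it would force $b\sum_{i\in T}\vec e_i=\vec 0$ with $b>0$, hence $T=\emp$, returning us to the forbidden empty-block situation. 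Therefore both $S$ and $T$ are nonempty.

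Finally, I would pick any $j\in T$ and read off the $j$-th coordinate of the displayed identity: the left-hand side is $\big(\sum_{i\in S}\vec a_i\big)_j$, which is an integer because $A$ has entries in $\bez$, while the right-hand side is $b$. Thus $b$ equals an integer, and being positive it is a positive integer, which is exactly what the corollary requires.

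The reason the hypothesis is genuinely needed, and the only place any real content hides, is precisely this dichotomy on $I_1$: without the no-zero-sum condition one could have $T=\emp$, so that the first block lives entirely inside $A$ and says nothing about $b$. This is exactly the escape route exploited by the matrix of Theorem \ref{notint}, where $\vec a_1+\vec a_2=\vec 0$ lets $b=\frac{1}{2}$ slip through. So the main (and essentially only) point to verify carefully is that the columns condition's first block cannot avoid the identity columns, after which integrality of $b$ is forced coordinatewise.
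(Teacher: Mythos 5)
Your proof is correct and is essentially the same as the paper's: both obtain a rational $b$ from Corollary \ref{cordiprone}, use the no-zero-sum hypothesis to force the first block $I_1$ of the columns condition to contain a column of $-bI_u$, and then read off the corresponding coordinate to see that $b$ is a (positive) integer sum of entries of $A$. The only difference is that your separate elimination of $S=\emp$ is redundant, since that case would give $b=0$, already excluded by $b\in\beq^+$.
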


\begin{proof} Pick by Corollary \ref{cordiprone} a positive rational $b$ such that
$(\begin{array}{cc}A&-bI_u\end{array})$ is KPR and pick $m$ and $I_1$, $I_2$,\ldots,
$I_m$ as guaranteed by the columns condition. Now $I_1$ is not contained in
$\nhat{v}$ so pick $t\in\nhat{u}$ such that $v+t\in I_1$.  Then
$b=\sum_{\{ 1,\ldots,v \} \cap I_1} a_{t,j}$ and is therefore an integer.\end{proof}

We conclude by relating the property of being multiply KPR to central subsets of $\ben$. If $S$ is any
discrete space,  its Stone-\v{C}ech compactification $\beta S$ can be regarded as the set of ultrafilters on $S$, with the topology defined by choosing the sets of the form 
$\overline A=\{p\in\beta S:A\in p\}$, where $A$ denotes a subset of $S$, as a base for the open sets. The semigroup operation of $S$ can be extended to $\beta S$ in such a way that $\beta S$ becomes a compact right topological
semigroup, with the property that, for every $s\in S$ the mapping $x\mapsto sx$ from $\beta S$ to itself, is continuous. Any compact right topological semigroup has a smallest ideal which contains an idempotent. An idempotent of this kind is called
 {\it minimal\/},  and a subset of $S$ which is a member of a minimal  idempotent   is called   {\it central\/}.  These  sets have very rich combinatorial properties. The reader is referred to \cite{HS} for further information.

We regard $\beta\ben$ as a semigroup, with the semigroup operation $+$ being the extension of  addition  
on $\ben$. We also regard $\ben$ as embedded in $\beq$ and $\beta\ben$ as embedded in $\beta\beq_d$, where
$\beq_d$ is the set $\beq$ with the discrete topology.
Hence, if $c\in \beq$ and $p\in \beta\ben$, $cp\in \beta\beq_d$ is defined by the 
fact that the operation of multiplication on $\beq$ extends to $\beta\beq_d$, and the map $p\mapsto cp$
is continuous.

\begin{definition}\label{firstentries} A finite matrix over $\beq$ is said to be a {\it first entries\/}
matrix if no row is identically zero, the first non-zero entry of each row is positive and the first non-zero
entries of two different rows are equal if they occur in the same column. A first entries matrix is said to be {\it unital\/} if the first non-zero entry of each row is 1. \end{definition}

\begin{theorem}\label{central} Let $u,k,v_1,v_2,\ldots,v_k\in\ben$ with $k\geq 2$. Let $p$ be a minimal idempotent in $\beta\ben$. For each
$t\in\{1,2,\ldots,k\}$, let $A_t$ be a $u\times v_t$ matrix with entries from $\beq$. Then
$(A_1\:A_2\:\ldots\:A_k)$ is multiply KPR if and only if there exist minimal idempotents
$p_1,p_2,\ldots,p_k$ in $\beta\ben$, with $p=p_1$, with the following property: given members $C_1,C_2,\ldots,C_k$ of $p_1,p_2,\ldots,p_k$ respectively, there exists $\vec x_t\in C^{v_t}$ for each
$t\in\{1,2,\ldots,k\}$ such that $c_t\vec x_t\subseteq C_t^{v_t}$ and
 $\sum_{t=1}^k\,A_tc_t\vec x_t=\vec O$. \end{theorem}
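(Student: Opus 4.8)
The plan is to treat the two implications separately, in both cases using Theorem~\ref{thmdkpr} to trade the multiply KPR condition for a single KPR matrix $M=(\begin{array}{ccccc}A_1&c_2A_2&\ldots&c_kA_k\end{array})$, where throughout I put $c_1=1$. The sufficiency direction is routine: given a finite colouring of $\ben$, for each $t$ let $C_t$ be the unique colour class lying in $p_t$, which is a monochromatic member of $p_t$. Feeding $C_1,\ldots,C_k$ into the stated property produces vectors $\vec x_t$ whose scalings $c_t\vec x_t$ have all entries in $C_t$ and satisfy $\sum_{t=1}^k A_t(c_t\vec x_t)=\vec 0$; writing $\vec y_t=c_t\vec x_t$ gives monochromatic $\vec y_1,\ldots,\vec y_k$ with $\sum_{t=1}^k A_t\vec y_t=\vec 0$, so $(A_1,\ldots,A_k)$ is multiply KPR.

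For necessity, first apply Theorem~\ref{thmdkpr} to obtain $c_2,\ldots,c_k\in\beq^+$ with $M$ KPR, and then take $p_t=c_tp$ for each $t$, so that $p_1=p$ as required. The main obstacle is to show that each $c_tp$ is a genuine minimal idempotent of $(\beta\ben,+)$. Writing $c_t=m_t/n_t$ in lowest terms, the dilation $x\mapsto c_tx$ is an additive automorphism of $\beq$ and hence extends to an automorphism of $\beta\beq_d$; since the homomorphism of $\ben$ onto the finite group $\bez/n_t\bez$ forces $n_t\ben$ (and likewise $m_t\ben$) into every idempotent of $\beta\ben$, one checks that $c_tp\in\beta\ben$ and is idempotent there. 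Minimality I would get by factoring the dilation through the closed subsemigroups $\overline{n_t\ben}$ and $\overline{m_t\ben}$: because each contains a minimal idempotent of $\beta\ben$, the standard subsemigroup lemma \cite{HS} yields $K(\overline{n_t\ben})=\overline{n_t\ben}\cap K(\beta\ben)$, so that $p\in K(\overline{n_t\ben})$, and $K(\overline{m_t\ben})=\overline{m_t\ben}\cap K(\beta\ben)$; transporting $p$ by the isomorphism $x\mapsto x/n_t$ of $\overline{n_t\ben}$ onto $\beta\ben$ and then by $x\mapsto m_tx$ of $\beta\ben$ onto $\overline{m_t\ben}$ (both preserving smallest ideals) places $c_tp$ in $K(\overline{m_t\ben})\subseteq K(\beta\ben)$.

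It then remains to verify the property for these $p_t$. Given $C_t\in p_t$, set $C=\bigcap_{t=1}^k\{x\in\ben:c_tx\in C_t\}$. Each of these sets lies in $p$, precisely because $C_t\in c_tp=p_t$, so $C\in p$; in particular $C$ is central and $C\subseteq C_1$. Since $M$ satisfies the columns condition it is KPR, and any central set contains a solution of every kernel partition regular system (a standard consequence of the central-sets machinery, see \cite{HS}). Hence there is $\vec z$ with all entries in $C$ and $M\vec z=\vec 0$; splitting $\vec z=(\vec x_1,\ldots,\vec x_k)$ gives $\vec x_t\in C^{v_t}\subseteq C_1^{v_t}$, all entries of $c_t\vec x_t$ in $C_t$, and $\sum_{t=1}^k A_t(c_t\vec x_t)=\vec 0$, which is exactly the required property. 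The one genuinely delicate ingredient is the minimality of the dilated idempotent $c_tp$; the rest is ultrafilter bookkeeping together with Theorem~\ref{thmdkpr} and the standard central-sets machinery.
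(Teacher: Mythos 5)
Your proof is correct and follows essentially the same route as the paper: sufficiency because each $p_t$ contains a monochromatic colour class, and necessity by taking $p_t=c_tp$ via Theorem~\ref{thmdkpr}, pulling the $C_t$ back to the central set $C=\bigcap_t c_t^{-1}C_t\in p$, and extracting a kernel vector of the combined matrix from $C$. The only differences are cosmetic: the paper cites \cite[Lemma 5.19.2]{HS} for the minimality of $c_tp$ where you rederive it, and it routes the ``central sets contain kernel vectors of KPR matrices'' step explicitly through a unital first entries matrix $G$ with $AG=O$ and \cite[Lemma 2.8]{HLS}, which is exactly the standard fact you invoke as a black box.
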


\begin{proof} The condition stated is obviously sufficient for $(A_1\:A_2\:\ldots\:A_k)$ to be
multiply KPR because, given any finite colouring of $\ben$, every element of $\beta\ben$
has a member which is monochrome.

To prove that it is necessary, assume that $(A_1\:A_2\:\ldots\:A_k)$ is multiply KPR.  By
Theorem \ref{thmdkpr}, there exist $c_1,c_2,c_3,\ldots,c_k\in\beq^+$, with $c_1=1$,
such that $A=(c_1A_1\:c_2A_2\:c_3A_3\:\ldots\:c_kA_k)$ is KPR. 
 For each $t\in\{1,2,\ldots,k\}$, let $p_t=c_tp$.
It follows from \cite[Lemma 5.19.2]{HS} that $p_t$ is also a minimal idempotent in $\beta\ben$. Let $C_t\in p_t$ for each $t\in\{1,2,\ldots,k\}$.

 Let $v=v_1+v_2+\ldots +v_k$.  Since $A$ satisfies the
columns condition, there exists $m\in \ben$ and a $v\times m$ unital first entries matrix $G$ over $\beq$ such that $AG=O$. We can write $G$ in block form as $G=\left(\matrix{G_1\cr G_2\cr \vdots\cr G_t}\right)$, where, for each $t\in\{1,2,\cdots,k\}$, $G_t$ is a $v_t\times m$
 matrix over $\beq$. Let $C=\bigcap_{t=1}^k\, c_t^{-1}C_t$. Since
$C\in p_1$, $C$ is a central subset of $\ben$. By \cite[Lemma 2.8]{HLS}, there exists
$\vec x\in \ben^m$ such that all the entries of $G\vec x$ are in $C$. Put $\vec x_t=G_t \vec x$ for each $t\in\{1,2,\ldots,k\}$. Then all the entries of $\vec x_t$ are in $C$, and so all the entries of $c_t\vec x_t$ are in $C_t$.  Furthermore, $\sum_{t=1}^k\,A_tc_t\vec x_t=\sum_{t=1}^k\,c_tA_tG_t\vec x=AG\vec x=\vec 0$. 
\end{proof}

In a similar vein, the following characterisation of doubly IPR matrices follows
very easily from Corollary \ref{cordiprtwo}. Let $u,v\in\ben$ and let $A$ be a $u\times v$
matrix over $\beq$. Then $A$ is doubly IPR if and only if, for every minimal idempotent $p\in\beta\ben$, there exists a minimal idempotent $q\in\beta\ben$ such that, whenever $B\in p$
and $C\in q$, there exists $\vec x\in B^v$ satisfying $A\vec x\in C^u$.

$$\begin{array}{ll}\hbox{Dennis Davenport}&\hbox{Neil Hindman}\\
\hbox{Department of Mathematics}&\hbox{Department of Mathematics}\\
\hbox{Howard University}&\hbox{Howard University}\\
\hbox{Washington, DC 20059 USA}&\hbox{Washington, DC 20059 USA}\\
\hbox{\tt dennis.davenport@live.com}&\hbox{\tt nhindman@aol.com}\\
 \\
\hbox{Imre Leader}&\hbox{Dona Strauss}\\
\hbox{Department of Pure Mathematics}&\hbox{Department of Pure Mathematics}\\
\hbox{\hskip 15 pt and Mathematical Statistics}&\hbox{University of Leeds}\\
\hbox{Centre for Mathematical Sciences}&\hbox{Leeds LS2 9J2, UK}\\
\hbox{Wilberforce Road}&\hbox{\tt d.strauss@hull.ac.uk}\\
\hbox{Cambridge, CB3 0WB, UK}\\
\hbox{\tt i.leader@dpmms.cam.ac.uk}\end{array}$$

\end{document}